\newtheorem{thm}{Theorem}
\newtheorem{prop}{Proposition}
\newtheorem{lem}{Lemma}
\newtheorem{Rem}{Remark}
\begin{document}

{\fontsize{12pt}{17pt}\selectfont
\title{Jordan homomorphisms of triangular algebras over noncommutative algebras}

\author{Oksana Bezushchak}
\address{Oksana Bezushchak: Faculty of Mechanics and Mathematics, Taras Shevchenko National University of Kyiv, Volodymyrska, 60, Kyiv 01033, Ukraine}
\email{bezushchak@knu.ua}

	\keywords{Algebras of triangular matrices, antiderivation, antihomomorphism, derivation, Jordan derivation, Jordan homomorphisms,   homomorphism}
\subjclass[2020]{16W10 (Primary) 15A30, 17C50 (Secondary)}

\maketitle

\begin{abstract}
	D.~Benkovi\v{c} described Jordan homomorphisms of algebras of triangular matrices over 
	a commutative unital ring without additive $2$-torsion. 
	We extend this result to the case of noncommutative rings and remove the assumption of additive torsion.
	
Let $R$ be an associative unital algebra over a commutative unital ring $\Phi$.  
Consider the algebra $T_n(R)$ of triangular $n \times n$ matrices over $R$, and its subalgebra $T_n^0(R)$ consisting of matrices whose main diagonal entries lie in $\Phi$.  
We prove that for any Jordan homomorphism of $T_n(R)$, its restriction to $T_n^0(R)$ is standard.
\end{abstract}

\section{Introduction}

Let $A$ and $B$ be  associative (not necessarily commutative)  algebras over an associative commutative ring. An additive map $\varphi: A\to B$ is called a \textbf{Jordan homomorphism} if it  satisfies the following identities:

\begin{enumerate}
	\item[(i)]  $\varphi(x^2)= \varphi(x)^2$ for all $x\in A$;
	\item[(ii)] $\varphi(xyx)= \varphi(x) \varphi(y) \varphi(x)$ for all $x, y \in A$.
\end{enumerate}
If the algebra $B$ does not have additive $2$-torsion, then the identity (ii) follows from (i). For elements $x,y\in A,$ denote $x\circ y = xy+yx.$ The assumption (i) implies $$\varphi(x\circ y)= \varphi(x)\circ \varphi(y).$$

Let us recall that an additive map $\psi:A\to B$ is called an \textbf{antihomomorphism} if  it  satisfies the identity: $\psi(ab)=\psi(b)\psi(a)$ for any elements $a,b\in A.$

Every algebra homomorphism or antihomomorphism is a Jordan homomorphism, but not vice versa.

A deep and well-studied topic in ring theory and Jordan structure theory concerns the natural conditions under which Jordan homomorphisms can be represented as combinations of homomorphisms and antihomomorphisms of associative algebras.   L.K.~Hua~\cite{Hua} showed that a Jordan automorphism of a division ring is an automorphism or an antiautomorphism. I.~Herstein~\cite{Her} proved that a Jordan homomorphism onto a  prime ring without additive $2$-torsion is a homomorphism or an antihomomorphism. M.~Bre\v{s}ar~\cite{Bresar,Bresar2} extended it to the description of Jordan homomorphisms onto  semiprime rings. On the other hand, W.S.~Martindale~\cite{Mar} and N.~Jacobson~\cite{Jacobson1}(see also K.~McCrimmon~\cite{McCrimmon}) described all Jordan  homomorphisms of rings with at least three pairwise orthogonal full idempotents. See \cite{Bresar_Zelmanov} for further recent results on Jordan homomorphisms.

A bit more precisely: let  $A^{\text{op}}$ be the opposite algebra of an algebra $A$; that is, $A$ and  $A^{\text{op}}$ have the same additive structure, but multiplication in $A^{\text{op}}$  is defined by $a^{\text{op}} b^{\text{op}}=(ba)^{\text{op}}$ for all $a$, $b\in A$. Consider the direct sum $A\oplus A^{\text{op}}$. The map $S:A \to A\oplus A^{\text{op}},$ defined by $S(a) = a+ a^{\text{op}}$ for all $a\in A$, is a Jordan homomorphism. We denote by  $S\langle   A\oplus A^{\text{op}}\rangle$  the subalgebra of $A\oplus A^{\text{op}}$, generated by all elements  of the form $a+ a^{\text{op}}$, where $a\in A$.

A Jordan homomorphism $\varphi: A\to B$ is called \textbf{standard} if there exists 
a homomorphism  $\chi: S\langle A\oplus A^{\text{op}}\rangle \to B$ such that the diagram 
$$
\xymatrix{A\ar[rr]\ar[rrd]_{\varphi}& ^S&
	S\langle A\oplus A^{\text{op}}\rangle \ar[d]^{\chi}
	\\ &&B} \qquad \quad \xymatrix{a\ar[rr]\ar[rrd]& ^S&
	a +a^{\text{op}}\ar[d]^{\chi}
	\\ &&\varphi(a)}$$
	is commutative.

In this paper, we  explore Jordan homomorphisms of triangular algebras over noncommutative algebras.

Let $ \Phi$ be an associative commutative ring with $1$, 
and let $R$ be an associative unital $ \Phi $-algebra.   The algebra \( T_n(R) \), \( n \geq 2 \), consists of upper triangular \( n \times n \) matrices:  
\[
T_n(R) = 
\begin{pmatrix}
R & R & \cdots & R \\
					0 & R & \cdots & R \\
					\vdots & \vdots & \ddots & \vdots \\
					0 & 0 & \cdots & R
				\end{pmatrix}.
\]
Consider  the ideal \( S_n(R) \) of strictly upper triangular matrices:
\[
S_n(R) = 
\begin{pmatrix}
	0 & R & \cdots & R \\
	0 & 0 & \cdots & R \\
	\vdots & \vdots & \ddots & \vdots \\
	0 & 0 & \cdots & 0
\end{pmatrix},
\]
 the subalgebra \[ \mathrm{Diag}(\Phi) = 
\begin{pmatrix}
\Phi & 0 & \cdots & 0 \\
	0 &\Phi & \cdots & 0 \\
	\vdots & \vdots & \ddots & \vdots \\
	0 & 0 & \cdots & \Phi
\end{pmatrix},
\] consisting of diagonal matrices, and the subalgebra
\[
T_n^0(R) = \mathrm{Diag}(\Phi) \ltimes S_n(R) = \begin{pmatrix}
\Phi & R & \cdots & R \\
0 & \Phi & \cdots & R \\
\vdots & \vdots & \ddots & \vdots \\
0 & 0 & \cdots & \Phi
\end{pmatrix}.
\]

In 1998,  L.~Moln\'{a}r and P.~\v{S}emrl  \cite{Molnar_Semrl} proved that automorphisms and antiautomorphisms are the only Jordan automorphisms of triangular algebras over a field with at least three elements.  This result was subsequently extended to a  description of isomorphisms of  triangular rings over more general  coefficient rings by K.~Beidar, M.~Bre\v{s}ar, M.~Chebotar \cite{BBC}, Yao~Wang and Yu~Wang \cite{Yiqiu_Wang}, T.-L.~Wong \cite{Wong}, and C.-K.~Liu and W.-Y.~Tsai~\cite{Liu_Tsai}. 

D.~Benkovi\v{c}~\cite{Benkovic}  proved that if \( R =\Phi  \)  
and the ring \( \Phi  \) has no additive $2$-torsion,  
then every Jordan $\Phi $-linear homomorphism of the \( \Phi  \)-algebra \( T_n(\Phi ) \) is standard.

We extend this result to an arbitrary  associative (not necessarily commutative) unital  algebra~\( R \). 

\begin{thm}\label{main_th1}
Let \( R \) be an arbitrary associative  \( \Phi  \)-algebra with identity, and let  $n \geq 2$.   
Suppose that	$\varphi: T_n(R) \to B$ is a Jordan homomorphism. Then the restriction of $\varphi$  to $T_n^0 (R)$  is standard.  \end{thm}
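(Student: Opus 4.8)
Throughout put $A := T_n^0(R)$ with its block decomposition $A = \bigoplus_{i\le j}A_{ij}$, where $A_{ii} = \Phi e_{ii}$ and $A_{ij} = Re_{ij}$ for $i<j$, and write $e_{ij}$ for the matrix units. The plan is to describe $\varphi$ via the Peirce decomposition of $B$ determined by the images of the $e_{ii}$. \emph{Step 1.} Putting $x=1$ in (ii) gives $\varphi(y) = \varphi(1)\varphi(y)\varphi(1)$ for all $y\in T_n(R)$, and $\varphi(1) = \varphi(1)^2$ by (i); so the image of $\varphi$ lies in the corner $\varphi(1)B\varphi(1)$ and, replacing $B$ by this corner, we may assume $\varphi(1) = 1_B$. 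Setting $f_i := \varphi(e_{ii})$, each $f_i$ is idempotent by (i), and from $f_i\circ f_j = \varphi(e_{ii}\circ e_{jj}) = 0$ together with $f_if_jf_i = \varphi(e_{ii}e_{jj}e_{ii}) = 0$ we get, for $i\ne j$, $f_if_j = f_i(f_if_j) = -f_i(f_jf_i) = -(f_if_jf_i) = 0$, so $f_1,\dots,f_n$ are pairwise orthogonal idempotents with $\sum_i f_i = 1_B$. (Already here one must use (ii) rather than a halving argument, since additive $2$-torsion is allowed.) Let $B = \bigoplus_{i,j} B_{ij}$, $B_{ij} := f_iBf_j$, be the induced Peirce decomposition, $B_{ij}B_{k\ell}\subseteq\delta_{jk}B_{i\ell}$.

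\emph{Step 2.} For $a\in A_{ij}$ with $i<j$ one has $a = e_{ii}ae_{jj}$ and $e_{jj}ae_{ii}=0$, so applying $\varphi$ to $e_{ii}ae_{jj}+e_{jj}ae_{ii}$ gives $\varphi(a) = f_i\varphi(a)f_j + f_j\varphi(a)f_i\in B_{ij}\oplus B_{ji}$, while $\varphi(A_{ii})\subseteq B_{ii}$. This defines additive maps $d_i\colon\Phi\to B_{ii}$, $d_i(\lambda) := \varphi(\lambda e_{ii})$, and, for $i<j$, $\varphi^+_{ij}\colon R\to B_{ij}$, $\varphi^-_{ij}\colon R\to B_{ji}$ by $\varphi(ae_{ij}) = \varphi^+_{ij}(a)+\varphi^-_{ij}(a)$. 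Evaluating $\varphi$ on the Jordan brackets of matrix units and reading off Peirce components (on each side all but the components matching a nonzero associative product vanish) yields: for $i<j<k$, $\varphi^+_{ik}(ab) = \varphi^+_{ij}(a)\varphi^+_{jk}(b)$ and $\varphi^-_{ik}(ab) = \varphi^-_{jk}(b)\varphi^-_{ij}(a)$; for $i<j$ and $\lambda,\mu\in\Phi$, $\varphi^+_{ij}(\lambda a\mu) = d_i(\lambda)\varphi^+_{ij}(a)d_j(\mu)$ and $\varphi^-_{ij}(\lambda a\mu) = d_j(\mu)\varphi^-_{ij}(a)d_i(\lambda)$; and, from the brackets of two off-diagonal elements whose two associative products both vanish, $\varphi^+_{ij}(R)\varphi^-_{k\ell}(R) = \varphi^-_{k\ell}(R)\varphi^+_{ij}(R) = 0$ whenever $(i,j)\ne(k,\ell)$. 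These relations say exactly that, \emph{provided each $d_i$ is multiplicative}, the map $\theta\colon A\to B$ with $\theta(\lambda e_{ii}) = d_i(\lambda)$, $\theta(ae_{ij}) = \varphi^+_{ij}(a)$ is a homomorphism and the map $\eta\colon A\to B$ with $\eta(\lambda e_{ii}) = d_i(\lambda)$, $\eta(ae_{ij}) = \varphi^-_{ij}(a)$ is an antihomomorphism, the two agreeing on $\mathrm{Diag}(\Phi)$; write $d := \bigoplus_i d_i$ for their common restriction there.

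\emph{Step 3.} Next I would identify the target algebra. Since the ideal of $T_n^0(R)$ generated by its commutators equals $S_n(R)$, one checks that $S\langle A\oplus A^{\mathrm{op}}\rangle = \{(x,y^{\mathrm{op}})\in A\oplus A^{\mathrm{op}} : x-y\in S_n(R)\}$, i.e.\ pairs with the same main diagonal. Any homomorphism $\chi$ of this algebra restricts to a homomorphism $\chi_+$ of $A$ on the subalgebra $\{(x,\mathrm{diag}(x)^{\mathrm{op}})\}\cong T_n^0(R)$ and to an antihomomorphism $\chi_-$ of $A$ on $\{(\mathrm{diag}(y),y^{\mathrm{op}})\}\cong T_n^0(R)^{\mathrm{op}}$, with $\chi_+,\chi_-$ agreeing on $\mathrm{Diag}(\Phi)$ and $\chi(x,y^{\mathrm{op}}) = \chi_+(x) + \chi_-(y) - d(\mathrm{diag}\,x)$; conversely such a pair defines a map $\chi$ by this formula, which is a homomorphism precisely when $\chi_+(S_n(R))\,\chi_-(S_n(R)) = \chi_-(S_n(R))\,\chi_+(S_n(R)) = 0$. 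The requirement $\chi\circ S = \varphi|_A$ forces $\chi_+ = \theta$, $\chi_- = \eta$ (compare Peirce blocks of $\varphi(ae_{ij})$). Hence \textbf{the theorem reduces to two assertions}: that each $d_i\colon\Phi\to B_{ii}$ is a ring homomorphism, and that $\theta(S_n(R))\,\eta(S_n(R)) = \eta(S_n(R))\,\theta(S_n(R)) = 0$, i.e.\ $\varphi^+_{ij}(R)\,\varphi^-_{ij}(R) = \varphi^-_{ij}(R)\,\varphi^+_{ij}(R) = 0$ for every \emph{single} block $(i,j)$.

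\emph{Step 4 — the main obstacle.} I expect the same-block vanishing to be the heart of the matter. From $(ae_{ij})^2 = 0$ one gets $\varphi^+_{ij}(a)\varphi^-_{ij}(a) = 0$, whence by linearization $\varphi^+_{ij}(a)\varphi^-_{ij}(b) = -\varphi^+_{ij}(b)\varphi^-_{ij}(a)$; and Jordan triple identities give $\varphi^+_{ij}(a)\varphi^-_{ij}(b)\varphi^+_{ij}(a) = 0$ (so $\varphi^+_{ij}(a)\varphi^-_{ij}(b)$ squares to zero) and that $\varphi^+_{ij}(a)\varphi^-_{ij}(b)$ commutes with $\varphi(Re_{ii})$. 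Over a $2$-torsionfree base the linearized relations already suffice; in general one must push further, applying (ii) to elements of $T_n(R)$ that combine the off-diagonal entry with \emph{$R$-valued} diagonal entries — this is precisely why the hypothesis concerns a Jordan homomorphism of the full $T_n(R)$ while the conclusion concerns $T_n^0(R)$ — so as to force $\varphi^+_{ij}(a)\varphi^-_{ij}(b) = 0$; the commutativity of $\mathrm{Diag}(\Phi)$ on the diagonal of $T_n^0(R)$ is then what allows the pieces $d_i$ to be glued, and, by feeding the off-diagonal relations back through the blocks together with $\varphi(c)^2 = \varphi(c^2)$ for $c = \lambda e_{ii} + e_{i,i+1}$, it also forces each $d_i$ to be multiplicative (its defect $d_i(\lambda\mu) - d_i(\lambda)d_i(\mu)$ is seen to square to zero and to annihilate all $\varphi^\pm_{ij}(R)$). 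Granting these two points, $\chi(x,y^{\mathrm{op}}) := \theta(x) + \eta(y) - d(\mathrm{diag}\,x)$ is a well-defined homomorphism $S\langle A\oplus A^{\mathrm{op}}\rangle\to B$ with $\chi(a + a^{\mathrm{op}}) = \theta(a) + \eta(a) - d(\mathrm{diag}\,a) = \varphi(a)$ for every $a\in T_n^0(R)$, which is exactly the assertion that $\varphi|_{T_n^0(R)}$ is standard. I note finally that for $n=2$ there is no system of three pairwise orthogonal connected idempotents to invoke, so this hands-on analysis is genuinely required in that case.
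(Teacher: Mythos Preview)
Your approach via the Peirce decomposition of $B$ is genuinely different from the paper's: rather than building $\theta$ and $\eta$ by hand, the paper works in the universal special enveloping algebra $U(T_n(R))$, shows that the subalgebra $U^0$ generated by $u(T_n^0(R))$ is isomorphic to $S\langle T_n^0(R)\oplus T_n^0(R)^{\mathrm{op}}\rangle$ (Proposition~\ref{P1}), and then reads off the theorem by universality. The key device there is the canonical involution $*$ on $U(T_n(R))$ fixing each $u(a)$: every element of $\ker\pi^0$ is shown to be $*$-antisymmetric, hence central in $U^0$, hence contained in $\sum_i\Phi\,u(e_i)$, on which $\pi^0$ is visibly injective. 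Your same-block vanishing $\varphi^+_{ij}(a)\varphi^-_{ij}(b)=0$ is precisely the image of the relation $u(e_{ij}(a))\,u(e_{ij}(b))=0$, which the paper establishes in Lemma~\ref{L1} by writing $e_{ij}(a)=e_{ij}(1)\circ e_{ii}(a)$ and $e_{ij}(b)=e_{ij}(1)\circ e_{jj}(b)$ with $a,b\in R$ --- confirming your intuition that $R$-valued diagonal entries are exactly what is needed here.

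The genuine gap is Step~4. You correctly isolate the two outstanding claims --- same-block vanishing and multiplicativity of each $d_i$ --- but prove neither. For the first, ``push further with $R$-valued diagonals'' is a plan, not an argument; and the assertion that in the $2$-torsion-free case the linearized relations $\varphi^+_{ij}(a)\varphi^-_{ij}(b)=-\varphi^+_{ij}(b)\varphi^-_{ij}(a)$ and $\varphi^+_{ij}(a)\varphi^-_{ij}(b)\varphi^+_{ij}(a)=0$ already suffice is not substantiated (I do not see how they alone force $\varphi^+_{ij}(a)\varphi^-_{ij}(b)=0$). For the second, your sketch --- that the defect $\delta:=d_i(\lambda\mu)-d_i(\lambda)d_i(\mu)$ squares to zero and annihilates all $\varphi^\pm_{ij}(R)$ --- even if carried out, would not yield $\delta=0$: a nonzero nilpotent element of $B_{ii}$ can perfectly well kill $B_{ij}$ and $B_{ji}$. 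In the paper this multiplicativity comes for free once $\ker\pi^0=0$ is known, since $u(\lambda e_i)u(\mu e_i)-u(\lambda\mu e_i)\in\ker\pi^0$; reproducing that conclusion by a direct computation in $B$ appears to require some substitute for the involution/centrality trick, and your outline does not provide one.
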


Theorem~\ref{main_th1} establishes universality of the  Jordan homomorphism 
	\[
	T_n^0(R) \longrightarrow S\langle T_n(R) \oplus T_n(R)^{\text{op}} \rangle,
	\]
	but it leaves open the question about the 	structure of  $S\langle T_n(R) \oplus T_n(R)^{\mathrm{op}} \rangle.$	In the next theorem, we prove the decomposition
	\[
	S\langle T_n^0(R) \oplus T_n^0(R)^{\text{op}} \rangle 
	\cong
	\operatorname{Diag}(\Phi) \ltimes 
	(S_n(R) \oplus S_n(R)^{\text{op}}),
	\]
	which leads to a description of Jordan homomorphisms.

\begin{thm}\label{main_th2} Let ${\varphi: T_n(R) \to B}$  be a Jordan homomorphism. Then there  exist a unique homomorphism $\psi_1: T_n^0(R)  \to B$ and a unique antihomomorphism $\psi_2: T_n^0(R) \to B$ such that:
	\begin{itemize}
		\item[$(i)$] $\varphi(a) = \psi_1(a) = \psi_2(a)$ for all $a \in \emph{\text{Diag}}(\Phi)$,
		\item[$(ii)$] $ \psi_1(a)  \psi_2(b)= \psi_2(b)  \psi_1(a)=0$ for all $a, b \in S_n(R)$,
		\item[$(iii)$] $\varphi(a) =\psi_1(a) + \psi_2(a)$ for all $a \in S_n(R)$.
	\end{itemize} \end{thm}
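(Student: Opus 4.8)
My plan is to deduce Theorem~\ref{main_th2} from Theorem~\ref{main_th1}, whose conclusion is that the restriction $\varphi|_{T_n^0(R)}$ factors through the universal map $S : T_n^0(R) \to S\langle T_n^0(R) \oplus T_n^0(R)^{\mathrm{op}}\rangle$ via some algebra homomorphism $\chi$. So the heart of the matter is to understand the structure of the associative algebra $U := S\langle T_n^0(R) \oplus T_n^0(R)^{\mathrm{op}}\rangle$ and the map $\chi$, producing from them the homomorphism $\psi_1$ and antihomomorphism $\psi_2$. The first and main step is therefore to establish the claimed decomposition
\[
S\langle T_n^0(R) \oplus T_n^0(R)^{\mathrm{op}} \rangle \;\cong\; \operatorname{Diag}(\Phi) \ltimes \bigl(S_n(R) \oplus S_n(R)^{\mathrm{op}}\bigr).
\]
To do this I would split the generating elements $a + a^{\mathrm{op}}$ according to the decomposition $T_n^0(R) = \operatorname{Diag}(\Phi) \oplus S_n(R)$ as $\Phi$-modules: for a diagonal matrix $d \in \operatorname{Diag}(\Phi)$ we have $d + d^{\mathrm{op}} = (d, d)$ (identifying $\operatorname{Diag}(\Phi)$ with its diagonal copy, legitimate because diagonal matrices over the \emph{commutative} ring $\Phi$ commute and $\operatorname{Diag}(\Phi)^{\mathrm{op}} \cong \operatorname{Diag}(\Phi)$), while for $s \in S_n(R)$ we get $s + s^{\mathrm{op}} = (s,0) + (0,s^{\mathrm{op}})$. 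The key computational observations are that $(s,0)\cdot(0,t^{\mathrm{op}}) = 0$ and $(0,s^{\mathrm{op}})\cdot(t,0) = 0$ for $s,t \in S_n(R)$ inside $T_n^0(R) \oplus T_n^0(R)^{\mathrm{op}}$ — the two "halves" of the nilpotent part annihilate each other — and that $(d,d)$ acts on each half the way $d$ (resp.\ $d$ acting on the right) acts on $S_n(R)$. From these one checks that the subalgebra generated by the $a+a^{\mathrm{op}}$ is exactly $\{(d,d) : d \in \operatorname{Diag}(\Phi)\} \oplus (S_n(R),0) \oplus (0, S_n(R)^{\mathrm{op}})$, and that this is closed under multiplication and isomorphic to the stated semidirect product. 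One must check that no extra elements are forced into $U$: since $(S_n(R),0)$ and $(0,S_n(R)^{\mathrm{op}})$ are already closed under products (each being an ideal in a copy of $T_n^0(R)$) and the cross products vanish, nothing new appears, so $U$ is exactly this module.

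The second step is to read off $\psi_1$ and $\psi_2$. Compose: $\operatorname{Diag}(\Phi) \ltimes S_n(R) \xrightarrow{\iota_1} U$, where $\iota_1$ sends $d + s \mapsto (d,d) + (s,0)$; this is an algebra homomorphism (using the annihilation of cross terms to see it respects multiplication), and I set $\psi_1 := \chi \circ \iota_1$. Similarly $\iota_2 : d + s \mapsto (d,d) + (0,s^{\mathrm{op}})$ is an algebra \emph{anti}homomorphism $T_n^0(R) \to U$ on the nilpotent part while being a homomorphism on the diagonal (again because diagonal entries lie in the commutative $\Phi$), hence an antihomomorphism overall once one checks the mixed diagonal/nilpotent products — precisely: for $d \in \operatorname{Diag}(\Phi)$, $s\in S_n(R)$, $ds$ and $sd$ map correctly under $\iota_2$ because $\operatorname{Diag}(\Phi)$ is central in $T_n^0(R)$ modulo... actually one must be a little careful here since $\operatorname{Diag}(\Phi)$ is \emph{not} central in $T_n^0(R)$; the resolution is that $\iota_2(d) = (d,d)$ is symmetric so conjugation-type relations still work out, and I would verify $\iota_2(ds) = \iota_2(s)\iota_2(d)$ directly from the module action. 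Set $\psi_2 := \chi \circ \iota_2$. Then properties $(i)$, $(ii)$, $(iii)$ follow: $(i)$ because $S(d) = \iota_1(d) = \iota_2(d)$ for diagonal $d$; $(iii)$ because $S(s) = \iota_1(s) + \iota_2(s)$ for $s \in S_n(R)$; and $(ii)$ because $\iota_1(S_n(R)) = (S_n(R),0)$ and $\iota_2(S_n(R)) = (0,S_n(R)^{\mathrm{op}})$ annihilate each other in $U$, hence so do their images under the homomorphism $\chi$.

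The third step is uniqueness. Suppose $\psi_1', \psi_2'$ is another such pair. On $\operatorname{Diag}(\Phi)$ they agree with $\varphi$ by $(i)$. On $S_n(R)$, from $(iii)$ we get $\psi_1(s) + \psi_2(s) = \psi_1'(s) + \psi_2'(s)$; the idea is to separate the two summands using idempotents. The matrix units $e_{ii}$ lie in $\operatorname{Diag}(\Phi)$, and $\psi_1(e_{11} + \cdots + e_{nn}) = \psi_1(1)$ is an idempotent in $B$ (as is $\psi_2(1)$), and multiplying the identity $\psi_1(s)+\psi_2(s) = \psi_1'(s)+\psi_2'(s)$ on the left and right by these idempotents, together with $(ii)$, should isolate $\psi_1(s) = \psi_1'(s)$ and $\psi_2(s) = \psi_2'(s)$ — more carefully, one uses that $\psi_1(1)\psi_1(s) = \psi_1(s)$, $\psi_1(1)\psi_2(s) = \psi_1(1)\psi_2(1)\psi_2(s)$ and the cross-annihilation from $(ii)$ to see $\psi_1(1)\psi_2(s)$ vanishes appropriately; the same for $\psi_1'$. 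I expect the main obstacle to be this uniqueness bookkeeping, since $\psi_1(1)$ and $\psi_2(1)$ need not be orthogonal (only property $(ii)$ restricts their interaction, and only on $S_n(R)$, not on the diagonal where they coincide with $\varphi(1)$). One resolves it by working inside the peirce-type decomposition of $B$ relative to $\varphi(1) = \psi_1(1) = \psi_2(1)$ and tracking where $\psi_1(s)$ and $\psi_2(s)$ live; alternatively, and more cleanly, one notes that $\psi_1$ is determined as the composite of the \emph{universal} property of $S$ with $\chi$ restricted to the homomorphic copy, and any homomorphism/antihomomorphism pair satisfying $(i)$–$(iii)$ must induce the same map $\chi$ out of $U$ by its universal (generated-by-symmetric-elements) property, whence $\psi_1 = \psi_1'$ and $\psi_2 = \psi_2'$.
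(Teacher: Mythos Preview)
Your approach is essentially the same as the paper's: compute $S\langle T_n^0(R)\oplus T_n^0(R)^{\mathrm{op}}\rangle$ explicitly as $\{(d,d):d\in\operatorname{Diag}(\Phi)\}+S_n(R)+S_n(R)^{\mathrm{op}}$, let $\psi_1,\psi_2$ be $\chi$ restricted to the two obvious copies of $T_n^0(R)$ and $T_n^0(R)^{\mathrm{op}}$ sitting inside, and prove uniqueness by the universal property of $S$ (your second, ``cleaner'' argument is exactly what the paper does). The one place your write-up is genuinely loose is the inclusion $(S_n(R),0)\subseteq U$: the facts you list (cross-annihilation and ``$(d,d)$ acts on each half'') only show that the target module is closed under multiplication, not that the generators $a+a^{\mathrm{op}}$ actually reach $(s,0)$ separately from $(0,s^{\mathrm{op}})$. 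The paper supplies the missing one-line computation $(e_i+e_i^{\mathrm{op}})(e_{ij}(r)+e_{ij}(r)^{\mathrm{op}})=e_{ij}(r)$, which is the idempotent trick you were circling around; with that line added your argument is complete, and your detour through Peirce decompositions for uniqueness can be dropped.
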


In the special case $R = \Phi$, Theorem~\ref{main_th2}  reduces to D.~Benkovi\v{c}'s theorem in  \cite{Benkovic}, except that we impose no assumptions regarding  $2$-torsion.

D.~Benkovi\v{c} proved in \cite{Benkovic} that  the Jordan homomorphism $T_n(\Phi) \to  T_n(\Phi) \oplus T_n(\Phi)^{\text{op}}, $ $a \to a+ a^{\text{op}},$   cannot be represented as the sum of a homomorphism and an antihomomorphism with orthogonal images.This explains the difference between   Theorem~\ref{main_th2}  and the formulations in  \cite{Bresar,Bresar2,Her}.

In Sec.~\ref{Sec3}, we give an example showing that the restriction to $T_n^0(R)$ in Theorems~\ref{main_th1} and \ref{main_th2} is essential\footnote{The author is grateful to M.~Bre\v{s}ar for suggesting this example.}.

Finally, using Theorem~\ref{main_th2}, we describe bimodule derivations of the ring $T_n(R).$

Let $M$ be a bimodule over the algebra $T_n(R)$.  
An $\Phi$-linear mapping $d : T_n(R) \to M$ is called a \textbf{derivation} 
if 
\[
d(ab) = d(a)b + a d(b), \quad \text{for all } a,b \in T_n(R).
\]

An $\Phi$-linear mapping $d : T_n(R) \to M$ is called an \textbf{antiderivation} 
if 
\[
d(ab) = b d(a) + d(b)a, \quad \text{for all } a,b \in T_n(R).
\]

An $\Phi$-linear mapping $d : T_n(R) \to M$ is called a \textbf{Jordan derivation} 
if 
\[
d(a^2) = d(a)a + a d(a), \qquad 
d(aba) = d(a)ba + a d(b)a + ab d(a) , 
\quad \text{for all } a,b \in T_n(R).
\]

\begin{thm}\label{main_th3}
	Let  $M$ be a bimodule over $T_n(R)$ and let $d : T_n(R) \to M$ be a Jordan derivation.  
	Then there exist a derivation $d_1 : T_n^0(R) \to M$ and an antiderivation $d_2 : T_n^0(R) \to M$ 
	such that the restriction of $d$ to $T_n^0(R)$ is equal to $d_1 + d_2$, and 
	\[
	d_2(\mathrm{Diag}(\Phi)) = d_2([S_n(R), S_n(R)]) = (0).
	\]
\end{thm}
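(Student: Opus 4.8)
The plan is to derive Theorem~\ref{main_th3} from Theorem~\ref{main_th2} via the standard split-null-extension trick. Given the bimodule $M$ over $A := T_n(R)$, form the split null extension $B := A \ltimes M$, whose underlying $\Phi$-module is $A \oplus M$ with multiplication $(a,m)(a',m') = (aa', am' + ma')$. A routine check shows that the map $\varphi : A \to B$ defined by $\varphi(a) = (a, d(a))$ is $\Phi$-linear, and that the Jordan-derivation identities for $d$ are exactly equivalent to $\varphi$ being a Jordan homomorphism: indeed $\varphi(a^2) = (a^2, d(a)a + a d(a)) = (a,d(a))^2 = \varphi(a)^2$, and similarly $\varphi(aba) = \varphi(a)\varphi(b)\varphi(a)$ unwinds to the second Jordan-derivation identity. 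So the hypotheses of Theorem~\ref{main_th2} apply to $\varphi$.

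Next I would apply Theorem~\ref{main_th2} to this $\varphi$ to obtain a homomorphism $\psi_1 : T_n^0(R) \to B$ and an antihomomorphism $\psi_2 : T_n^0(R) \to B$ with the three listed properties. Composing with the projection $\pi_A : B \to A$, $(a,m)\mapsto a$ (an algebra homomorphism), and using that $\pi_A \circ \varphi = \mathrm{id}_{T_n^0(R)}$ on $T_n^0(R)$, one sees from $(iii)$ that on $S_n(R)$ we have $\pi_A\psi_1 + \pi_A\psi_2 = \mathrm{id}$, and from $(i)$ that $\pi_A\psi_1 = \pi_A\psi_2 = \mathrm{id}$ on $\mathrm{Diag}(\Phi)$. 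Write $\psi_1(x) = (\alpha_1(x), d_1(x))$ and $\psi_2(x) = (\alpha_2(x), d_2(x))$ for $x \in T_n^0(R)$, where $\alpha_i = \pi_A\psi_i$ and $d_i : T_n^0(R)\to M$ are $\Phi$-linear. Because $\psi_1$ is a homomorphism, expanding $\psi_1(xy) = \psi_1(x)\psi_1(y)$ in coordinates gives $\alpha_1(xy) = \alpha_1(x)\alpha_1(y)$ and $d_1(xy) = \alpha_1(x)d_1(y) + d_1(x)\alpha_1(y)$; similarly $\psi_2$ an antihomomorphism gives $\alpha_2(xy) = \alpha_2(y)\alpha_2(x)$ and $d_2(xy) = \alpha_2(y)d_2(x) + d_2(y)\alpha_2(x)$. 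The point is that $\alpha_1$ and $\alpha_2$ need not be the identity on all of $T_n^0(R)$ — they agree with the identity only on $\mathrm{Diag}(\Phi)$ and their sum is the identity on $S_n(R)$, so the maps $d_1, d_2$ as defined are module-valued quasi-derivations twisted by $\alpha_1,\alpha_2$, not honest (anti)derivations of $T_n^0(R)$ acting through the original module action.

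To fix this I would pull the $M$-components back along the genuine inclusion. Set $e_{ii}$ for the diagonal matrix units; since $\{e_{11},\dots,e_{nn}\}$ is a complete orthogonal family of idempotents with $\psi_1(e_{ii}), \psi_2(e_{jj})$ mutually orthogonal by $(ii)$ in the relevant degrees, one can use the Peirce decomposition of $M$ induced by these idempotents to show that for $x \in T_n^0(R)$ the components $d_1(x), d_2(x)$ land in the correct Peirce pieces so that the twisted identities above collapse to genuine derivation and antiderivation identities for the $T_n(R)$-module $M$: $d_1(xy) = d_1(x)y + x d_1(y)$ and $d_2(xy) = y d_2(x) + d_2(y)x$. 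Property $(i)$ gives $d_1 + d_2 = d$ on $\mathrm{Diag}(\Phi)$, and combined with $(iii)$ and linearity $d_1 + d_2 = d$ on all of $T_n^0(R)$. Finally, since $\psi_1$ and $\psi_2$ coincide with $\varphi$ (hence with each other) on $\mathrm{Diag}(\Phi)$, we get $d_2 = d - d_1$ there and, re-running the orthogonality from $(ii)$ on Peirce components, $d_2$ kills $\mathrm{Diag}(\Phi)$; and since $d_2$ is an antiderivation it is a Jordan derivation, so $d_2([S_n(R),S_n(R)]) = (0)$ follows because an antiderivation sends $[a,b] = ab - ba$ to $b\,d_2(a) + d_2(b)a - a\,d_2(b) - d_2(a)b$, which vanishes once the Peirce placement forces the cross terms to cancel — this is where the noncommutativity of $R$ is handled, and it is the step I expect to require the most care. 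The main obstacle, then, is not the null-extension reduction (which is formal) but verifying that the $\alpha_i$-twists disappear at the level of $M$, i.e.\ that the Peirce decomposition genuinely intertwines the abstract homomorphism $\psi_i$ with the original bimodule structure; everything else is bookkeeping.
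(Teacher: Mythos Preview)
Your split-null-extension reduction is exactly the route the paper intends (it simply cites Benkovi\v{c}, whose argument is this trick), and the outline is sound. There is one slip and one missed shortcut.

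The slip: property~(i) of Theorem~\ref{main_th2} says $\psi_1=\psi_2=\varphi$ on $\mathrm{Diag}(\Phi)$, so with your definitions the $M$-components satisfy $d_1=d_2=d$ there, hence $d_1+d_2=2d$, not $d$. The remedy is to keep $d_1$ as the $M$-component of $\psi_1$ but define $d_2:=d|_{T_n^0(R)}-d_1$; then $d_2$ vanishes on $\mathrm{Diag}(\Phi)$ by construction and agrees with the $M$-component of $\psi_2$ on $S_n(R)$ by~(iii). The shortcut: your worry about the $\alpha_i$-twists dissolves without any Peirce analysis once you invoke the \emph{uniqueness} clause of Theorem~\ref{main_th2}. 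The composite $\pi_A\circ\varphi$ is the identity Jordan homomorphism on $T_n(R)$, and the pair (identity on $T_n^0(R)$; diagonal projection) visibly satisfies (i)--(iii) for it, so by uniqueness $\alpha_1=\mathrm{id}$ on all of $T_n^0(R)$ and $\alpha_2=0$ on $S_n(R)$. With $\psi_1(x)=(x,d_1(x))$ the homomorphism identity for $\psi_1$ is literally the derivation identity for $d_1$; and the orthogonality~(ii), read in $B$-coordinates, says exactly $a\,d_2(b)=d_2(b)\,a=0$ for all $a,b\in S_n(R)$. Combining this with the antihomomorphism identity for $\psi_2$ one checks in four cases that $d_2$ is an honest antiderivation and, in particular, that $d_2\big(S_n(R)\cdot S_n(R)\big)=0$---stronger than the commutator claim you flagged as the delicate step, and requiring no cancellation argument at all.
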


The proof of this theorem is derived from Theorem~\ref{main_th2}, literally following the same steps as in~\cite{Benkovic}.

\section{Universal Special Enveloping Algebras}

Let $T(A)$ denote the tensor algebra over the $\Phi$-module $A$. Let $J$ be the ideal of $T(A)$ generated by all elements of the forms:
\[
a \otimes a - a^2,\quad a \otimes b \otimes a - aba,\quad \text{for } a, b \in A.
\]
Let  $U(A) = T(A) / J.$ 

The map \( u: A \to U(A) \), \( u(a) = a + J \in U(A) \),  
is a Jordan homomorphism. This homomorphism is  \textbf{universal}, that is,  
for any Jordan homomorphism \( \varphi: A \to B \),  
there exists a homomorphism \( \chi: U(A) \to B \) such that the diagram
$$
\xymatrix{A\ar[rr]\ar[rrd]_{\varphi}& ^u&
	U(A)\ar[d]^{\chi}
	\\ &&B} $$
is commutative. The algebra \( U(A) \) is referred to as the \textbf{universal special enveloping algebra}  
of the Jordan algebra \( A^{(+)} \); see \cite{Jacobson,McCrimmon2,ZhSShSh}.

There  exists a homomorphism $
\pi: U(A) \to S\langle A\oplus A^{\text{op}}\rangle$ that makes the diagram 
$$
\xymatrix{a\ar[rr]\ar[rrd]_{S}& ^u&
	u(a)\ar[d]^{\pi}  
	\\ &&\qquad \qquad a+a^{\text{op}}, \qquad a\in A,} $$
commutative.

Denote $$A^0 = T_n^0 (R), \quad U^0(A)=\langle   u(A^0)\rangle \subseteq U(A),$$ and let $\pi^0: U^0 (A) \to S\langle   A^0\oplus (A^0)^{\text{op}}\rangle$ be the restriction of the homomorphism~$\pi.$

The following proposition is key to the proof of Theorem~\ref{main_th1}.

\begin{prop}\label{P1}
	The homomorphism \( \pi^0 \) is an isomorphism.
\end{prop}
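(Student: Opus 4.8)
The plan is to show that $\pi^0$ is surjective and injective separately; surjectivity is immediate, so the content is injectivity. Surjectivity holds because $S\langle A^0 \oplus (A^0)^{\mathrm{op}}\rangle$ is by definition generated by the elements $a + a^{\mathrm{op}}$ with $a \in A^0$, and each such element is $\pi^0(u(a))$; hence $\pi^0$ maps onto a generating set. For injectivity, the natural strategy is to exhibit a homomorphism in the reverse direction, i.e.\ to build a Jordan homomorphism $\theta : A^0 \to U^0(A)$ (or directly a homomorphism $S\langle A^0\oplus(A^0)^{\mathrm{op}}\rangle \to U^0(A)$) whose composite with $\pi^0$ is the identity on generators. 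Since $U^0(A)$ is generated by $u(A^0)$, it suffices to produce an associative-algebra homomorphism $\sigma : S\langle A^0\oplus(A^0)^{\mathrm{op}}\rangle \to U^0(A)$ with $\sigma(a + a^{\mathrm{op}}) = u(a)$ for all $a \in A^0$; then $\pi^0 \circ \sigma$ and $\sigma \circ \pi^0$ are both the identity on the respective generating sets, forcing $\pi^0$ to be an isomorphism.

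To construct $\sigma$, I would use the concrete structure of $A^0 = \mathrm{Diag}(\Phi)\ltimes S_n(R)$. The key algebraic fact to establish inside $U^0(A)$ is that the images $u(e_{ii})$ of the diagonal idempotents ($i = 1,\dots,n$) are pairwise orthogonal idempotents summing to $1$, and that for $i < j$ the "off-diagonal" generators behave multiplicatively: writing $u_{ij}$ for $u(e_{ij}\otimes r)$-type elements, one should check relations of the form $u(e_{ii}) u(a) u(e_{jj})$ picks out the $(i,j)$-entry, and that products $u(a)u(b)$ for strictly upper triangular $a,b$ already lie in the span of the $u$-images with no "wrong-order" terms, because the strictly-upper-triangular part has $S_n(R)^2$-type nilpotence and the Jordan identity $u(aba) = u(a)u(b)u(a)$ together with linearization controls the rest. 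Concretely, in $A^0 \oplus (A^0)^{\mathrm{op}}$ the subalgebra generated by $a + a^{\mathrm{op}}$ splits, via the claimed decomposition $\mathrm{Diag}(\Phi)\ltimes(S_n(R)\oplus S_n(R)^{\mathrm{op}})$ alluded to before Theorem~\ref{main_th2}, into a "homomorphic copy" $S_n(R)$ and an "antihomomorphic copy" $S_n(R)^{\mathrm{op}}$ sitting over a shared diagonal; I would build $\sigma$ by sending that diagonal to $u(\mathrm{Diag}(\Phi))$, the copy of $S_n(R)$ to the products $u(e_{ii})u(s)u(e_{jj})$ taken in increasing index order, and the copy of $S_n(R)^{\mathrm{op}}$ to the same with decreasing order, then verifying $\sigma$ respects multiplication.

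The main obstacle I expect is precisely the well-definedness of $\sigma$: one must verify that the assignment on generators extends to an associative homomorphism, which amounts to checking a finite but somewhat delicate list of relations in $U^0(A)$ — orthogonality and completeness of the $u(e_{ii})$, the Peirce-type compatibilities $u(e_{ii})u(e_{jj}) = 0$ for $i\neq j$, and the fact that in $U^0(A)$ every product of two $u$-images of strictly upper triangular matrices equals the $u$-image of their ordinary product (so that no genuinely new relations beyond those in $S_n(R)$, respectively $S_n(R)^{\mathrm{op}}$, are needed). These follow from the defining relations of $U(A)$ via standard linearization of the quadratic and cubic Jordan identities (replacing $a$ by $a+b$, etc.), but the bookkeeping over a noncommutative $R$ and with no $2$-torsion hypothesis is where care is required; this is also the step where the restriction to $T_n^0(R)$ rather than all of $T_n(R)$ is used, since the non-scalar diagonal entries of $T_n(R)$ would obstruct the orthogonality relations among the $u(e_{ii})$. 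Once $\sigma$ is shown well-defined, the equality $\pi^0\circ\sigma = \mathrm{id}$ on generators is immediate from the definitions, and injectivity of $\pi^0$ follows.
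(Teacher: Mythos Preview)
Your strategy of building an explicit inverse $\sigma$ is a legitimate plan and genuinely different from the paper's, but the key relation you write down is false. You claim that in $U^0(A)$ ``every product of two $u$-images of strictly upper triangular matrices equals the $u$-image of their ordinary product,'' i.e.\ $u(a)u(b)=u(ab)$ for $a,b\in S_n(R)$. Apply $\pi^0$ to both sides: the left maps to $(a+a^{\mathrm{op}})(b+b^{\mathrm{op}})=ab+(ba)^{\mathrm{op}}$, while the right maps to $ab+(ab)^{\mathrm{op}}$. For $a=e_{12}(1)$, $b=e_{23}(1)$ these differ by $e_{13}(1)^{\mathrm{op}}\neq 0$, so $u$ is \emph{not} multiplicative on $S_n(R)$. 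The relations you would actually need involve the individual Peirce pieces $u(e_i)u(a)u(e_j)$ rather than $u(a)$ itself, and checking that those pieces multiply correctly inside $U^0(A)$ --- without already assuming $\pi^0$ injective --- is essentially the full content of the proposition; your sketch does not indicate how to do this from the Jordan axioms alone.

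The paper sidesteps this by a different mechanism. After Lemmas~\ref{L1} and~\ref{L2} give $U^0(A)=u(A^0)^3$, the paper uses the canonical involution $*$ on $U(A)$ that fixes $u(A)$ pointwise: for $x=u(a)u(b)u(c)$ one has $x+x^*=u(abc+cba)\in u(A^0)$, hence every $x\in I=\ker\pi^0$ satisfies $x+x^*\in I\cap u(A^0)=(0)$, so $x^*=-x$. A one-line computation then forces $I$ to be central in $U^0(A)$, and the Peirce decomposition coming from Lemma~\ref{L2} shows the center lies in $\sum_i\Phi\,u(e_i)$, on which $\pi^0$ is visibly injective. The involution trick is precisely what replaces the direct relation-checking that your inverse-construction would demand.
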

The proof of this proposition requires a few steps.

Let $1_A$ be the identity element of the algebra $A$. Without loss of generality, we assume that the algebra $B$ is generated by the image $\varphi(A).$  Let $e=\varphi(1_A).$ From the definition of a Jordan homomorphism, it follows that $e$ is  idempotent and that $\varphi(A) = e\varphi(A) e.$ Thus, $e=1_B$, the  identity of the algebra $B$.

Consider the idempotents in the algebra $A$: \[ e_i = \operatorname{diag}(0, \ldots, 0, 1, 0, \ldots, 0) , \quad \text{for} \quad 1 \leq i \leq n .\]   The algebra \( A \) is graded by the free abelian group  
\( \Gamma = \oplus _{i=1}^{n} \mathbb{Z} \omega_i \) of rank \( n \). Consider the subset  $
\Delta = \{ \omega_i - \omega_j \mid 1 \leq i < j \leq n \}.$
Then  
\[
A = A_0 + \sum_{\alpha \in \Delta} A_\alpha, \quad \text{with} \quad A_{ \omega_i - \omega_j } = e_i A e_j.
\]
The elements \( \varphi(e_i) \),  for \( 1 \leq i \leq n \), are pairwise orthogonal idempotents, satisfying
\[
\sum_{i=1}^n \varphi(e_i) = 1_B.
\] The \( \Gamma \)-grading on the algebra \( A \) naturally extends to a \( \Gamma \)-grading on the algebra  
\[
U(A), \quad \text{where} \quad U(A) = \bigoplus_{\alpha \in \Gamma} U(A_\alpha).
\]

The following two lemmas are analogous to Proposition~1 in \cite{Bezushchak_Kashuba_Zelmanov}.

\begin{lem}\label{L1} Let \( \alpha, \beta \in \Delta \), with \( \alpha + \beta \not\in \Delta \cup (0)\). Then  
\[
u(A_\alpha) u(A_\beta) = (0).
\] \end{lem}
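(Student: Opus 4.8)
The plan is to work inside the tensor algebra $T(A)$ before passing to the quotient $U(A)$, exploiting the fact that $U(A)$ is generated by $u(A)$ subject only to the Jordan relations, together with the $\Gamma$-grading. Write $\alpha = \omega_i - \omega_j$ with $i < j$ and $\beta = \omega_k - \omega_l$ with $k < l$. Since $A_\alpha = e_i A e_j$ and $A_\beta = e_k A e_l$, the product $u(A_\alpha)u(A_\beta)$ lands in the homogeneous component $U(A_{\alpha+\beta})$, and we are told $\alpha + \beta \notin \Delta \cup \{0\}$. The first step is to observe that $\alpha + \beta \notin \Delta \cup \{0\}$ forces, at the level of matrix units, either $j \neq k$ (so $e_j e_k = 0$ and the associative product $A_\alpha A_\beta$ already vanishes in $A$) or $j = k$ but $i \geq l$ — but $i < j = k < l$ is impossible, so in fact $j = k$ would give $\omega_i - \omega_l$ with $i < l$, which lies in $\Delta$; hence the hypothesis rules this out and we must be in the case $j \neq k$. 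So the combinatorial content is: the hypothesis $\alpha+\beta \notin \Delta\cup(0)$ is equivalent to $e_j e_k = 0$, i.e. $e_\alpha A e_\beta$-type products collapse.

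Granting $e_j e_k = 0$, the second step is the genuinely Jordan-theoretic one: from $e_j e_k = 0$ we want $u(e_iAe_j)\,u(e_kAe_l) = 0$ in $U(A)$. Here I would use the standard identities available in any special Jordan algebra (valid in $U(A)$ since $u$ is a Jordan homomorphism into an associative algebra): for an idempotent $e$ with Peirce decomposition, elements of $e_iAe_j$ with $j$ ``below'' and elements $e_kAe_l$ with $k$ strictly above a complementary idempotent multiply to zero. Concretely, pick the idempotent $f = e_1 + \dots + e_j$. Then $a = e_iAe_j$ satisfies $u(a) = u(f)u(a)u(f)$ is wrong in general — instead one uses that $u(a) \in U(A_\alpha)$ and $u(f)u(a) = u(a)$, $u(a)u(f) = u(a)$ fail, so a cleaner route: linearize $u(x^2) = u(x)^2$ on $x = a + b$ with $a \in A_\alpha$, $b \in A_\beta$, $a+b$ still makes sense since $\alpha \neq \beta$, giving $u(a)u(b) + u(b)u(a) = u(ab + ba) = u(0) = 0$ because $ab = ba = 0$ in $A$ (as $e_je_k = 0$ kills $ab$, and $e_le_i$ — note $l > k = $? — one checks $ba = 0$ too, or handles the asymmetric case separately). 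This gives $u(a)u(b) = -u(b)u(a)$, both sides in different graded components ($U(A_{\alpha+\beta})$ vs $U(A_{\beta+\alpha})$, equal, so no contradiction yet). To finish, bracket with a third element or use the $xyx$ identity with $y$ a suitable idempotent to show the product is its own negative and also idempotent-absorbed, forcing it to vanish.

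The main obstacle I anticipate is exactly this last point: the bilinearized square identity alone only yields anticommutation $u(a)u(b) = -u(b)u(a)$, not vanishing, so I need a second relation. The right tool is the bilinearization of identity (ii), $u(xyx) = u(x)u(y)u(x)$: setting $x = a$, $y = b$ with $aba = 0$ in $A$ gives $u(a)u(b)u(a) = 0$, and more usefully the full linearization $u(a)u(b)u(c) + u(c)u(b)u(a) = u(abc + cba)$ for $a, c \in A_\alpha$, $b \in A_\beta$. Choosing $c$ cleverly — in particular taking $b$ or $c$ to be an idempotent $e_m$ that acts as a left or right unit on the relevant Peirce space — should collapse one of the two terms on the left while the right side is again a product through $e_je_k = 0$, hence zero, isolating $u(a)u(b) = 0$. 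I would also double-check the degenerate small-$n$ cases ($n = 2$, where $\Delta$ is a single element and the hypothesis $\alpha + \beta \notin \Delta \cup (0)$ is automatically satisfied for $\alpha = \beta = \omega_1 - \omega_2$ since $2\omega_1 - 2\omega_2 \notin \Delta$), making sure the argument $u(A_{\omega_1-\omega_2})^2 = 0$ goes through — this is just $u(S_n(R))^2$-type nilpotence and follows from $ab = 0$ for strictly-above-diagonal $2\times 2$ blocks plus the square identity, so it is consistent.
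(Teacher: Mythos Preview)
Your opening combinatorics is essentially right (the hypothesis forces both $j\neq k$ and $i\neq l$, so $ab=ba=0$ in $A$), and linearizing the square identity correctly yields $u(a)u(b)=-u(b)u(a)$. You also correctly diagnose that a second relation is needed. But from that point the plan does not close.

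First, the claim that inserting an idempotent into the linearized triple-product identity will ``collapse one of the two terms on the left'' does not hold: with $x=c$ arbitrary and $y=a$, $z=b$, the identity $u(cab+bac)=u(c)u(a)u(b)+u(b)u(a)u(c)$ together with $ab=ba=0$ and $u(a)u(b)=-u(b)u(a)$ yields $[u(c),u(a)u(b)]=0$, i.e.\ \emph{centrality} of $u(a)u(b)$ in $U(A)$; no individual term drops out. Centrality is in fact the paper's first step, but it is not the end, and this is exactly where the absence of a $2$-torsion hypothesis bites. The paper's device is to choose an idempotent $e_m$ with $\operatorname{ad}(e_m)^2 a=a$ and $\operatorname{ad}(e_m)^2 b=0$ (possible whenever $\alpha\neq\beta$), and then to expand both $\operatorname{ad}(u(e_m))^2(u(a)u(b))=0$ and $\operatorname{ad}(u(e_m))^4(u(a)u(b))=0$ by Leibniz; these give $u(a)u(b)+2X=0$ and $u(a)u(b)+4X=0$ for the same cross-term $X$, whence $u(a)u(b)=0$ without dividing by~$2$. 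Nothing in your outline supplies this or an equivalent second equation.

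Second, your treatment of the case $\alpha=\beta$ is simply wrong. For $a,b\in A_{\omega_i-\omega_j}$ one still has $ab=ba=0$, but the square identity, even linearized, gives only $u(a)u(b)+u(b)u(a)=0$, not $u(a)u(b)=0$; so ``follows from $ab=0$ \ldots\ plus the square identity'' is false. Moreover no idempotent $e_m$ separates $a$ from $b$ in Peirce components here, so the $\operatorname{ad}^2/\operatorname{ad}^4$ trick above is unavailable as well. The paper handles $\alpha=\beta$ by an entirely different maneuver: it writes $e_{ij}(a)=e_{ij}(1)\circ e_{ii}(a)$ and $e_{ij}(b)=e_{ij}(1)\circ e_{jj}(b)$, expands $u(e_{ij}(a))\,u(e_{ij}(b))$ into four associative products, and kills each with either the $xyx$ identity or the orthogonality of $u(e_{ii}(a))$ and $u(e_{jj}(b))$. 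Your plan contains no substitute for this step.
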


\begin{proof} Let $ x, y \in A $ and suppose $  x y = y x =0. $  
Then   $
u(x)\circ u(y) = u(x \circ y) =0 .$  Hence, $u(x) u(y)=-u(y) u(x).$
Futhermore, for any element $a\in A,$ we have $$ u(xya +ayx)=u(x) u(y) u(a) + u(a) u(y) u(x) = u(x) u(y) u(a) - u(a) u(x) u(y) =0. $$ Therefore, the element $u(x) u(y)$ commutes with the element $u(a).$ Since $u(A)$ generates the algebra $U(A)$, we conclude that $u(x) u(y)$ lies in the center of $U(A).$

For any element $e\in A$, let $\text{ad}(e)$ denote the adjoint operator $\text{ad}(e):x \to [e,x],$ for $x\in A.$  Since $\text{ad}(e)^2 (x)= [e,[e,x]]=e^2 x + x e^2 - 2 exe,$ it follows that  $$ \text{ad}(u(e))^2 (u(x)) =u(\text{ad}(e)^2(x)) .$$ 

Let $\alpha = \omega_i - \omega_j,$ $\beta =\omega_i - \omega_k,$ with $1 \leq i < j< k \leq n.$ Let $a\in A_{\alpha},$ $b\in A_{\beta}.$ Then we have: $$\text{ad}(e_j)^2 (a)= a, \quad \text{ad}(e_j)^2 (b)= 0.$$
Hence,
\[
\operatorname{ad}(u(e_j))^2\big(u(a)\big) = u(a), \quad \operatorname{ad}(u(e_j))^2\big(u(b)\big) = 0.
\]
This yields:
\[
	\operatorname{ad}(u(e_j))^2\big(u(a)u(b)\big) = \Big(\operatorname{ad}(u(e_j))^2\big(u(a)\big)\Big) u(b)+ \\
	 u(a)\Big(\operatorname{ad}(u(e_j))^2\big(u(b)\big)\Big) + \]
\[	  2\, [u(e_j), u(a)] \, [ u(e_j) , u(b) ]= u(a) u(b)+ 2\, [u(e_j), u(a)] \, [ u(e_j) , u(b) ] = 0.
\]

Similarly,
\[
	\operatorname{ad}(u(e_j))^4\big(u(a)u(b)\big)  = \Big(\operatorname{ad}(u(e_j))^4\big(u(a)\big)\Big) u(b)+4  
	\Big(\operatorname{ad}(u(e_j))^3\big(u(a)\big)\Big) \Big(\operatorname{ad}(u(e_j))\big(u(b)\big)\Big) + \]
	\[	  6\, 	\Big(\operatorname{ad}(u(e_j))^2\big(u(a)\big)\Big) \Big(\operatorname{ad}(u(e_j))^2\big(u(b)\big)\Big) + 4  \Big(\operatorname{ad}(u(e_j))\big(u(a)\big)\Big) \Big(\operatorname{ad}(u(e_j))^3\big(u(b)\big)\Big)+ \]
	\[	u(a) \, \Big(\operatorname{ad}(u(e_j))^4\big(u(b)\big)\Big) =
	u(a) u(b)+ 4\, [u(e_j), u(a)] \, [ u(e_j) , u(b) ] =0.
\] This implies that $u(a) u(b) = 0$.

The case $\alpha = \omega_i - \omega_k$, $\beta = \omega_j - \omega_k$, with $i < j < k$, is similar.

Let $\alpha = \omega_i - \omega_j$ and $\beta = \omega_p - \omega_q$, with $i < j$, $p < q$; where $i,j,p,q$ are pairwise distinct. Then we have
\[
\text{ad}(e_i)^2 (a) = a, \quad \text{ad}(e_i)^2 (b) = 0 \quad \text{for all} \quad a\in A_{\alpha} \quad \text{and} \quad b\in A_{\beta}. 
\] 
Arguing as above, we get:
\[
\text{ad}(u(e_i))^2 \left( u(a) u(b) \right) = u(a) u(b) + 2\, [u(e_i), u(a)] \, [ u(e_i) , u(b) ]  = 0
\]
and
\[
\text{ad}(u(e_i))^4 \left( u(a) u(b) \right) = u(a) u(b) + 4\, [u(e_i), u(a)] \, [ u(e_i) , u(b) ]  = 0,
\]
which implies $u(a) u(b) = 0$.

Choose arbitrary elements $a, b \in R$. We need to show that 
\[
u(e_{ij}(a)) \, u(e_{ij}(b)) = 0.
\]
We have 
\[
e_{ij}(a) = e_{ij}(1) \circ e_{ii}(a), 
\quad e_{ij}(b) = e_{ij}(1) \circ e_{jj}(b).
\]
Hence
\[
u(e_{ij}(a)) = u(e_{ij}(1)) \circ u(e_{ii}(a)), 
\quad u(e_{ij}(b)) = u(e_{ij}(1)) \circ u(e_{jj}(b)),
\]
\[	u(e_{ij}(a)) u(e_{ij}(b)) 
	= \]
	\[ u(e_{ij}(1)) u(e_{ii}(a)) u(e_{ij}(1)) u(e_{jj}(b)) + u(e_{ij}(1)) u(e_{ii}(a))  u(e_{jj}(b)) u(e_{ij}(1)) 
 +  \]
 \[ u(e_{ii}(a)) u(e_{ij}(1)) u(e_{ij}(1))  u(e_{jj}(b)) +  u(e_{ii}(a)) u(e_{ij}(1)) u(e_{jj}(b)) u(e_{ij}(1)).
\]
In the first, third, and fourth summands we obtain:
\[
u(e_{ij}(1)) u(e_{ii}(a)) u(e_{ij}(1)) = 
u\!\left( e_{ij}(1) e_{ii}(a) e_{ij}(1) \right) = 0,
\]
\[
u(e_{ij}(1)) u(e_{ij}(1)) = 
u\!\left( e_{ij}(1) e_{ij}(1) \right) = 0,
\]
\[
u(e_{ij}(1)) u(e_{jj}(b)) u(e_{ij}(1)) = 
u\!\left( e_{ij}(1) e_{jj}(b) e_{ij}(1) \right) = 0.
\]
Since the idempotents  $u(e_i)$ and $u(e_j)$ are orthogonal, and $u(e_{ii}(a))\in u(e_i)U u(e_i),$ $u(e_{jj}(b))\in u(e_j)U u(e_j),$ it follows that $u(e_{ii}(a)) u(e_{jj}(b))=0.$ This completes the proof of the lemma. \end{proof}

\begin{lem}\label{L2} $U^0(A) =u(A^0)^3.$ \end{lem}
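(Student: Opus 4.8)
The statement $U^0(A) = u(A^0)^3$ asserts that the subalgebra generated by $u(A^0)$ is spanned already by products of length at most three. Since $A^0 = \mathrm{Diag}(\Phi) \ltimes S_n(R)$, the $\Phi$-module $A^0$ is spanned by the orthogonal idempotents $e_1,\dots,e_n$ together with the strictly-upper matrix units $e_{ij}(a)$ for $i<j$, $a\in R$. The plan is to show that any product $u(x_1)\cdots u(x_m)$ with $m\geq 4$ can be rewritten as a $\Phi$-linear combination of products of length $\leq 3$. Using the $\Gamma$-grading $U(A)=\bigoplus_{\alpha\in\Gamma}U(A_\alpha)$ and the fact that $A^0_0 = \mathrm{Diag}(\Phi)$ while the nonzero homogeneous components of $A^0$ sit in degrees $\omega_i-\omega_j$ with $i<j$, I may assume each $x_k$ is homogeneous: either $x_k\in\mathrm{Diag}(\Phi)$ or $x_k = e_{i_kj_k}(a_k)$ with $i_k<j_k$.

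First I would dispose of the diagonal factors. Writing $1_A = \sum_i e_i$ and using that each $u(e_i)$ is idempotent with $u(e_i)u(e_j)u(e_i) = u(e_ie_je_i)$, one sees that $u(\mathrm{Diag}(\Phi))$ lies in the commutative subalgebra generated by the orthogonal idempotents $u(e_1),\dots,u(e_n)$; moreover $u(e_i)u(x) = u(e_i x)$ whenever $x \in e_iAe_k$ or $x\in e_kAe_i$-type components interact correctly, more precisely $u(e_i)u(e_{ij}(a)) + u(e_{ij}(a))u(e_i) = u(e_i\circ e_{ij}(a)) = u(e_{ij}(a))$ and, multiplying by $u(e_j)$ on the right, $u(e_i)u(e_{ij}(a))u(e_j) = u(e_{ij}(a))u(e_j)$, etc. The upshot is that left/right multiplication by a diagonal element can be absorbed into a neighbouring $u(e_{ij}(a))$ factor (changing $a$) or produces a length-reduction, so it suffices to treat products of the form $u(e_{i_1j_1}(a_1))\cdots u(e_{i_mj_m}(a_m))$ with all $i_k<j_k$ and $m\geq 4$.

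Next comes the core reduction, which is where Lemma~\ref{L1} does the work. For a product of $m$ strictly-upper homogeneous elements to be nonzero, the partial sums of the degrees must stay "short": each $\alpha = \omega_i-\omega_j$ with $i<j$, and by Lemma~\ref{L1}, $u(A_\alpha)u(A_\beta)=(0)$ whenever $\alpha+\beta\notin\Delta\cup(0)$. Since $\Delta$ consists only of differences $\omega_p-\omega_q$ with $p<q$, the sum $\alpha+\beta$ of two such elements lies in $\Delta$ only when $j = $ the first index of $\beta$ (giving $\omega_i-\omega_k$ with $i<j<k$), i.e. consecutive factors must "chain". Iterating, a nonzero length-$m$ product must have the telescoping shape $u(e_{i_1i_2}(a_1))u(e_{i_2i_3}(a_2))\cdots u(e_{i_mi_{m+1}}(a_m))$ with $i_1<i_2<\cdots<i_{m+1}$ (any other adjacency pattern, after grouping, forces a zero). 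But then $u(e_{i_1i_2}(a_1))u(e_{i_2i_3}(a_2)) = u(e_{i_1i_2}(a_1)e_{i_2i_3}(a_2)\cdot\text{?})$ — this is the delicate point: $u$ is only a Jordan homomorphism, so I cannot directly write $u(x)u(y)=u(xy)$. Instead I use the identity $u(x)u(y)u(z) + u(z)u(y)u(x) = u(xyz)$ together with a vanishing partner: for the chained factors, $u(e_{i_mi_{m+1}}(a_m))u(e_{i_2i_3}(a_2))u(e_{i_1i_2}(a_1))$ has degree $(\omega_{i_m}-\omega_{i_{m+1}})+(\omega_{i_2}-\omega_{i_3})+\cdots$, whose partial sums leave $\Delta\cup(0)$, hence vanishes by Lemma~\ref{L1}; so $u(e_{i_1i_2}(a_1))u(e_{i_2i_3}(a_2))u(e_{i_3i_4}(a_3)) = u(e_{i_1i_2}(a_1)e_{i_2i_3}(a_2)e_{i_3i_4}(a_3)) = u(e_{i_1i_4}(a_1a_2a_3))$, and more generally any three consecutive chained factors collapse to a single $u$ of a matrix unit. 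Applying this repeatedly collapses the whole length-$m$ product down to length $\leq 3$ (in fact to length $1$ when $m$ is odd, length $2$ when even, after the last collapse).

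The main obstacle is this last collapsing step: controlling that one may always find, for any sub-block of three consecutive factors in a chained product, a reversed companion product that is forced to vanish by Lemma~\ref{L1}, so that the Jordan identity $u(x)u(y)u(z)+u(z)u(y)u(x)=u(xyz)$ genuinely yields $u(x)u(y)u(z)=u(xyz)$. One must check the partial-sum condition carefully: the reversed triple $e_{i_3i_4}(a_3), e_{i_2i_3}(a_2), e_{i_1i_2}(a_1)$ read left to right has first partial sum $\omega_{i_3}-\omega_{i_4}\in\Delta$ but second partial sum $\omega_{i_3}-\omega_{i_4}+\omega_{i_2}-\omega_{i_3} = \omega_{i_2}-\omega_{i_4}-(\omega_{i_3}-\omega_{i_3})$... one checks $i_2<i_3$ so this is $\omega_{i_2}-\omega_{i_4}+(\text{nonzero mismatch})$, which is not in $\Delta\cup(0)$ because the index pattern is wrong — precisely the hypothesis of Lemma~\ref{L1}. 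Once this vanishing is established the rest is bookkeeping, and I would also handle the short edge cases ($m=2$, or products that are zero outright) separately, concluding that $U^0(A)$, being spanned by $u(A^0)$, $u(A^0)^2$, and $u(A^0)^3$, equals $u(A^0)^3$ after noting $u(A^0)\subseteq u(A^0)^3$ via $u(x)=u(x)u(1_A)u(1_A)$ and $u(A^0)^2\subseteq u(A^0)^3$ similarly.
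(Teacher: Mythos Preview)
There is a genuine gap in the core reduction. Your claim that a nonzero product of strictly upper factors must form a \emph{monotone} chain $i_1<i_2<\cdots<i_{m+1}$ is incorrect: Lemma~\ref{L1} only kills an adjacent pair $u(A_\alpha)u(A_\beta)$ when $\alpha+\beta\notin\Delta\cup(0)$, and for $\alpha=\omega_i-\omega_j$, $\beta=\omega_p-\omega_q$ (both in $\Delta$) the sum lies in $\Delta$ not only when $j=p$ but also when $i=q$. Thus zig--zag products such as
\[
u(e_{12}(a))\,u(e_{23}(b))\,u(e_{12}(c))\,u(e_{23}(d))
\]
pass the adjacent--pair test at every step and are not covered by your argument.

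More seriously, even for a monotone chain your collapsing step fails. You want
\[
u(e_{i_1i_2}(a_1))\,u(e_{i_2i_3}(a_2))\,u(e_{i_3i_4}(a_3))
\;=\;u\bigl(e_{i_1i_4}(a_1a_2a_3)\bigr)
\]
via the linearized triple identity $u(x)u(y)u(z)+u(z)u(y)u(x)=u(xyz+zyx)$ together with the assertion that the reversed product $u(z)u(y)u(x)$ vanishes by Lemma~\ref{L1}. But the reversed adjacent sums are
\[
(\omega_{i_3}-\omega_{i_4})+(\omega_{i_2}-\omega_{i_3})=\omega_{i_2}-\omega_{i_4}\in\Delta,
\qquad
(\omega_{i_2}-\omega_{i_3})+(\omega_{i_1}-\omega_{i_2})=\omega_{i_1}-\omega_{i_3}\in\Delta,
\]
so Lemma~\ref{L1} yields no vanishing at all; your parenthetical arithmetic (``$\omega_{i_2}-\omega_{i_4}+(\text{nonzero mismatch})$'') is simply wrong. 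The identity therefore gives only $u(x)u(y)u(z)+u(z)u(y)u(x)\in u(A^0)$, which does not let you replace $u(x)u(y)u(z)$ by a single $u$-value.

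The paper's proof avoids both problems by a different and much shorter mechanism. From $u(x)\circ u(y)=u(x\circ y)\in u(A^0)$ one has $u(x)u(y)\equiv -\,u(y)u(x)\pmod{u(A^0)}$, so in any length-$4$ product the factors may be permuted freely modulo $u(A^0)^3$. Then either two of the four factors are diagonal idempotents $e_i,e_j$ (place them adjacent and use $u(e_i)u(e_j)=\delta_{ij}u(e_i)$), or at least three lie in components $A_\alpha,A_\beta,A_\gamma$ with $\alpha,\beta,\gamma\in\Delta$; a short root--system check shows that some pairwise sum, say $\alpha+\gamma$, lies outside $\Delta$, and after bringing those two factors next to each other Lemma~\ref{L1} kills the product. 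The idea you are missing is precisely this anticommutation modulo $u(A^0)$, which allows \emph{nonadjacent} factors to be brought together.
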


\begin{proof} To prove this, we need  to show that for any elements $$a_1, a_2, a_3, a_4 \in \bigcup_{\alpha \in \Delta} A_\alpha \cup \{e_1, \ldots, e_n\},  $$ the product $u(a_1) u(a_2) u(a_3) u(a_4)$ 
lies in $u(A^0)^3$. Clearly, we can arrange the factors $u(a_p)$ in an arbitrary order. If two elements  $a_p$ among $a_1, a_2, a_3, a_4 $ lie in the set $\{e_1, \ldots, e_n\}$, say $a_1 = e_i$, $a_2 = e_j$, then
$u(e_i) u(e_j) = \delta_{ij} u(e_i),$  which proves the claim. Here, $\delta_{ij}$ denotes the Kronecker delta.

If three elements $a_p$ among $a_1, a_2, a_3, a_4$ lie in 	$\bigcup_{\alpha \in \Delta} A_\alpha$, say $a_1\in A_\alpha$, $a_2 \in A_\beta,$ and  $a_3 \in A_\gamma,$  then at least one of the sums $ \alpha +\beta,$ $\alpha+ \gamma,$ or $\beta+ \gamma$ does not lie in $\Delta$. By Lemma~\ref{L1}, this implies the claim and completes the proof of the lemma. \end{proof}

The algebra $U(A)$ is equipped with an involution $^*$ that  fixes all elements from $u(A)$. Clearly, $U^0(A)^* =U^0(A).$ By Lemma \ref{L2}, for any element $x \in U^0(A)$, we have $x + x^* \in u(A^0).$

As above, we consider the  homomorphism 
\[
\pi \colon U(A) \to S\langle   A\oplus A^{\text{op}}\rangle, \qquad  \pi(u(a)) = a +a^{\text{op}},
\]
for all  \( a \in A \), together with its restriction $
\pi^0 \colon U^0(A) \to S\langle   A^0\oplus (A^0)^{\text{op}}\rangle.$ Let \( I = \ker \pi^0 \). Both algebras $U^0(A)$ and $S\langle   A^0\oplus (A^0)^{\text{op}}\rangle$ are equipped with  involutions, and $\pi^0$  is a homomorphism of involutive algebras. Hence \( I^* = I \) and \( I \cap u(A^0) = (0) \).

\begin{proof}[Proof of Proposition \ref{P1}] To prove the proposition, we need to show that  $I = (0).$

By Lemma \ref{L2},  $U^0(A) = u(A^0)^3.$ For all elements \( a,b,c \in A^0 \), and $x=u(a) u(b) u(c)$, we have:
\[
x+ x^*=u(a) u(b) u(c)+ u(c) u(b) u(a) = u(abc + cba) \in u(A^0).
\]
Hence, for an arbitrary element \( x \in I \), we obtain:
\[
x+ x^* \in I \cap u(A^0) = (0), \quad \text{and} \quad x^* =- x .
\]

Choose an arbitrary element \( x \in A^0 \). Then \( x u(a) \in I \). Therefore,
\[
(x u(a))^* = u(a) x^*  = - u(a) x= - x u(a).
\]

Thus, we showed that the element $x$ commutes with the element $u(a)$. Since $u(A^0)$ generates the $\Phi$-algebra $U^0(A)$, it follows that the ideal $I$ lies in the center of $U^0(A)$.	Let $U_+$ be the ideal of $U^0(A)$ generated by
	\[
	\sum_{\alpha \in \Delta} u(A_\alpha). 
	\]
	Clearly, 
	\[
	U^0(A) = \sum_{i=1}^{n} \Phi \, u(e_i) + U_+.
	\]	From Lemma \ref{L2}, it follows that
	\[
	U_+ = \sum_{1 \leq i \not= j \leq n} u(e_i) \, U_+ \, u(e_j).
	\]	Hence, the center of the algebra $U^0(A)$ lies in 
	\[
	\sum_{i=1}^{n} \Phi \, u(e_i) \quad \text{and} \quad I \subseteq \sum_{i=1}^{n} \Phi \, u(e_i).
	\]
	It is clear that the homomorphism $\pi$ maps any nonzero element of $\sum_{i=1}^{n} \Phi \, u(e_i)$ to a nonzero element. We have thus shown that  $I = \{0\}$, which completes the proof of the proposition. \end{proof}

\section{Jordan homomorphisms}\label{Sec3}

Let us prove Theorem~\ref{main_th1}.
\begin{proof}
	Let $\varphi : A \to B$ be a Jordan homomorphism with $A = T_n(R)$, $n \geq 2$. 
	By the universality of the Jordan homomorphism $u$ and Proposition~\ref{P1}, 
	there exists a homomorphism
	\[
	\chi : S\langle   A\oplus A^{\text{op}}\rangle \to B
	\]
	such that the diagram
	
	$$
	\xymatrix{A\ar[rr]\ar[rrd]_{\varphi}& ^S&
		S\langle   A\oplus A^{\text{op}}\rangle \ar[d]^{\chi}
		\\ &&B} $$
	is commutative. Consequently, the restriction diagram
		$$
	\xymatrix{A^0\ar[rr]\ar[rrd]& &
		S\langle   A^0\oplus (A^0)^{\text{op}}\rangle \ar[d]
		\\ &&B} $$
	is  also commutative, and hence  the restriction of	the homomorphism $\varphi$ to $A^0$ is standard.	
\end{proof}

Now let us prove Theorem~\ref{main_th2}.
\begin{proof}	We first show that
	\begin{equation}\label{Eq1}
S\langle   A^0\oplus (A^0)^{\text{op}}\rangle  
	= \{\, a + a^{\text{op}}  \mid a \in \operatorname{Diag}(\Phi) \, \}
	+ S_n(R) + S_n(R)^{\text{op}} .
		\end{equation}	The inclusion from left to right is immediate.  
	It remains to prove that $S_n(R)$ and $S_n(R)^{\text{op}}$ belong to the subalgebra of $A^0\oplus (A^0)^{\text{op}}$
	generated by elements $a + a^{\text{op}}$, $a \in A^0$.
	
	For $1 \leq i < j \leq n$ and $r \in R$, let $e_{ij}(r)$ denote the matrix
	having $r$ in the $(i,j)$-entry and zeros elsewhere. Then
	\[
	(e_i + e_i^{\text{op}})(e_{ij}(r) + e_{ij}(r)^{\text{op}}) = e_{ij}(r).
	\] Hence 	\[ e_{ij}(r) \in S\langle   A^0\oplus (A^0)^{\text{op}}\rangle  , \qquad
	S_n(R) \subseteq S\langle   A^0\oplus (A^0)^{\text{op}}\rangle.
	\] Similarly, 	\[ 
	S_n(R)^{\text{op}} \subseteq S\langle   A^0\oplus (A^0)^{\text{op}}\rangle.
	\] 
	
	Let $\varphi : A \to B$ be a Jordan homomorphism.  
	By Theorems~\ref{main_th1}, there exists a homomorphism
	\[
	\chi : \{\, a + a^{\text{op}} \mid 
	a \in \operatorname{Diag}(\Phi) \,\}  + S_n(R) + S_n(R)^{\text{op}}
	\;\longrightarrow B
	\]
	such that $\varphi(a) = \chi(a + a^{\text{op}} )$ for all $a \in A^0$.	The subalgebra
	\begin{equation}\label{Eq2}
	\{\, a +a^{\text{op}}  \mid a \in \operatorname{Diag}(\Phi) \,\} + S_n(R)
		\end{equation}
	is isomorphic to $S_n(R)$, while the subalgebra
	\begin{equation}\label{Eq3}
	\{\, a + a^{\text{op}}  \mid a \in \operatorname{Diag}(\Phi) \,\} + S_n(R)^{\text{op}}
		\end{equation}
	is isomorphic to $S_n(R)^{\text{op}}$.  	Let $\psi_1$ be the restriction of $\chi$ to  subalgebra~(\ref{Eq2}),	and let $\psi_2$ be the restriction of $\chi$ to  subalgebra~(\ref{Eq3}). The homomorphism $\psi_2$ may be regarded as an antihomomorphism from $A^0$ to $B$.  Then 
\[
\varphi(a) =\psi_1(a+a^{\text{op}} ) =\psi_2(a+a^{\text{op}} ), \quad \text{for all }  a \in \mathrm{Diag}(\Phi);
\]
\[
\varphi(a) = \psi_1(a) +\psi_2(a), \quad \text{for all } a \in S_n(R);
\]
\[
\psi_1(a)\psi_2(b) =\psi_2(b)\psi_1(a) = 0, \quad \text{for all  } a, b \in S_n(R).
\]

It remains to prove the uniqueness of  a homomorphism $\psi_1$ and  an antihomomorphism $\psi_2$. The pair $(\psi_1, \psi_2)$ gives rise to a homomorphism $\psi: S\langle   A^0\oplus (A^0)^{\text{op}}\rangle \to B$ such that the corresponding diagram
	$$
\xymatrix{A^0\ar[rr]\ar[rrd]& &
	S\langle   A^0\oplus (A^0)^{\text{op}}\rangle \ar[d]^\psi
	\\ &&B} $$
is   commutative. Since the algebra $S\langle   A^0\oplus (A^0)^{\text{op}}\rangle $ is generated by elements $a+a^{\text{op}},$ $a\in A^0,$ and $\psi(a+ a^{\text{op}})=\varphi(a),$ the homomorphism $\psi$ is unique. This implies the uniqueness of $\psi_1'$ and $\psi_2'$, and completes the proof of Theorem~\ref{main_th2}. \end{proof}

\begin{Rem} {\rm  Let us show that the restriction to $T_n^0(R)$ is essential.  
Let $R$ be a noncommutative $\Phi$-algebra, and let 
\(\psi : R \to B\) be a Jordan homomorphism that is not the sum of a homomorphism and an antihomomorphism 
(see~\cite{Bezushchak_Kashuba_Zelmanov,Bresar0,Jacobson2}).  Let $$p : T_n(R) \to R, \quad p\left(\begin{pmatrix}
	a_{11}  & \cdots & a_{1n} \\
	\vdots & \ddots & \vdots \\
	0           & \cdots & a_{nn}
\end{pmatrix}\right) = a_{11}.$$
Then the composition  $\psi \circ p : T_n(R) \to B$ is a Jordan homomorphism which cannot be represented as the sum of a homomorphism and an antihomomorphism. }\end{Rem}

\section*{Acknowledgment}

The author thanks  M.~Bre\v{s}ar, I.~Kashuba, and E.~Zelmanov for valuable and helpful discussions.

The author acknowledges support from the MES of Ukraine through the grant for the perspective development of the scientific direction Mathematical Sciences and Natural Sciences at TSNUK.

\end{document}